\providecommand{\U}[1]{\protect\rule{.1in}{.1in}}
\newtheorem{theorem}{Theorem}[section]
\newtheorem{definition}[theorem]{Definition}
\newtheorem{proposition}[theorem]{Proposition}
\newtheorem{remark}[theorem]{Remark}
\newcommand{\R}{\ensuremath{\mathbb{R}}}
\begin{document}
\title{Observability of linear systems on the Heisenberg Lie group}
\author{Victor Ayala\\ Instituto de Alta Investigación, Universidad de Tarapac\'a \\ Arica, Chile \\vayala@academicos.uta.cl
\and Thiago Matheus Cavalheiro\\Departamento de Matem\'{a}tica, Universidade Estadual do Norte do Paraná \\ Jacarezinho, Brazil \\thiago\_cavalheiro@hotmail.com 
\and Alexandre J. Santana\\Departamento de Matem\'{a}tica, Universidade Estadual de Maring\'{a}\\ Maring\'a, Brazil \\ajsantana@uem.br\\ 
}
\maketitle

%\runningtitle{Observability of linear systems}

 % \subtitle{...}

\begin{abstract}
{In control theory, understanding the observability property of a system is crucial for effectively managing and controlling dynamical systems. This property empowers us to deduce the internal state of a system from its outputs over time, even when direct measurements are impossible. By harnessing observability, we can accurately estimate the complete state of a system and reconstruct its dynamics using just limited information. In this work, we will find conditions for observability of linear systems in the three dimensional Heisenberg group $\mathcal{H}$. Considering the homomorphisms between the group and its simply connected subgroups, whose kernel is denoted by $K$, we will find sufficient conditions for observability on the system using a quotient space $\mathcal{H}/K$ as the output.}
\end{abstract}
%\keywords{Observability, Heisenberg Lie group, general pair, linear systems on Lie groups}
%  \classification[PACS]{}
 % \communicated{...}
 % \dedication{...}

%\journalname{Open Mathematics}
%\DOI{DOI}
%\startpage{1}
%\received{..}
%\revised{..}
%\accepted{..}

%\journalyear{2024}
%\journalvolume{1}
%\journalissue{1}

% 

	%%%%%%%%%%%%%%%%%%%%%%%%%%%%%%%%%%%%%%%%%%%%%%%%%%%%%%%%%%%%%%%%%%%%%%%%%%%%%%%%%%%%%%%
	
	%%%%%%%%%%%%%%%%%%%%%%%%%%%%%%%%%%%%%%%%%%%%%%%%%%%%%%%%%%%%%%%%%%%%%%%%%%%%%%%%%%%%%%%

	%%%%%%%%%%%%%%%%%%%%%%%%%%%%%%%%%%%%%%%%%%%%%%%%%%%%%%%%%%%%%%%%%%%%%%%%%%%%%%%%%%%%%%%
\maketitle
\section{Introduction}

	In the context of control theory, the observability property of a system relates to the ability to infer the internal state of a dynamical system based on its outputs over time. This property is critical for designing systems in which the full state space cannot be directly measured but must be estimated through available outputs. Moreover,  observability enables us to reconstruct the entire dynamics from limited information. 
 
 A compelling example comes from the renowned Pontryaguin's Maximum Principle, \cite{Pontryaguin} which earned him the Lenin Prize in Russia in the early sixties.
 
 Consider the challenge of stopping a train at a railway station as quickly as possible. 
 
 This situation represents a linear control system on the Euclidean plane. In this scenario, the first variable, $x(t)$, indicates the distance from the train to the station $(0,0)$ at time $t$, while the second one, its derivative, $y(t)$, represents the velocity. The control mechanism is based on acceleration. Therefore, the optimal problem reduces to taking any initial condition on the plane to the origin in minimum time.

Remarkably, we can fully reconstruct an optimal trajectories on the plane by using the projection onto the x-axis as the output, from a curve of distances. This means that the information provided by a one-dimensional subspace allows us to recover the entire optimal curve in the plane for any initial position of the train. In contrast, the velocity alone does not provide sufficient information. 

This example underscores the importance of observability: it enables us to develop effective solutions with fewer variables while highlighting the critical components of the system. Embracing this concept enhances efficiency and sharpens our focus on what truly matters in control systems.

The dynamic of the train example is determined by a real matrix of order two and a column vector. This model can be readily extended to n-dimensional Euclidean space as follows.

Consider the linear control system 
	\begin{equation*}
		\Sigma: \left\{\begin{array}{ccl}
			\dot{x(t)} &=& Ax(t) + Bu(t)\\
			y(x(t)) &=& Cx(t) 
		\end{array}\right., 
	\end{equation*}
	where $A \in \R^{n\times n}$, $B \in \R^{n \times m}$, $C \in \R^{l \times n}$, with $l < n$ and $u \in L_{const}(\R,\R^n)$. 

 The variable $y$ is the output map defined on $\R^{l}$ and computed by the matrix $C$. Let $\varphi_t(x,u)$ be the solution of the system starting in $x$ at the time $t \in \R$, and control $u$.  
 
 The system $\Sigma$ is \textit{observable in} $x$ if for every $z \in \R^n\setminus \{x\}$ there is a $t >0$ and a control $u$ such that $C\varphi_t(x,u) \neq C\varphi_t(z,u)$. Furthermore, $\Sigma$ is global observable (observable), if it is observable in any element $x$ of the $n$-dimensional vector space. So, every initial condition can be distinguished from any other state through a specific solution to the system.

 There exists a computable algebraic characterization of this property, as follows,
$\Sigma$ is observable if and only if  $\hbox{rank}(\mathcal{O}) = n$.
   Here,  $\mathcal{O} = [C \hbox{ }CA\hbox{ }...\hbox{ }CA^{n-1}]^T$, 
 \cite[Theorem 23]{Sontag}

In 1999, it was introduced the following notion: \cite{AyTi} 

A linear control system on a connected Lie group $G,$ is determined by the family,

\begin{equation*}\Sigma _{G} :\text{}\overset{ \cdot }{g (t)}\text{} =\mathcal{X} (g (t)) +\sum _{j =1}^{m}u_{j} (t) Y^{j} (g (t)) ,\text{}g (t) \in G ,\text{}t \in \mathbb{R}\text{} ,u \in \mathcal{U} ,
\end{equation*}of ordinary differential equations parametrized by the class  
 $\mathcal{U} =L_{l o c}^{1} (\Omega )\text{,}$ of admissible controls, i.e., by locally integrable functions $u :[0 ,T_{u}] \rightarrow \Omega  \subset $$\mathbb{R}^{m}$. The set $\Omega $ is closed and $0 \in i n t (\Omega )$. 

The drift $\mathcal{X}$ is a linear vector field, which means, exactly as in the matrix case, that its flows is a 1-parameter group of automorphisms of $G$.  And, for any $j\text{,}$ the control vector $Y^{j} \in \mathfrak{g}$, considered as left-invariant vector field. If $\Omega  =\mathbb{R}^{m}\text{,}$ the system is called unrestricted. Otherwise, $\Sigma _{G}$\ is restricted,  \cite{AyTi}.

\begin{remark} The observability property of $\Sigma$ doesn´t depends of the columns of the matrix $B$. The same is true for $\Sigma _{G}$.  It is shown that the invariant vector fields in the system do not influence observability.
\end{remark}

Let $K \subset G$ a closed subgroup. By using the projection  $\pi_K: G \longrightarrow G/K$, onto the homogeneous space, 
 the authors extend the observability results from Euclidean spaces to a Lie group $G$, \cite{AyalaHaci}. 

Our primary objective is to apply these results to arbitrary linear control systems on the connected and simply connected nilpotent Heisenberg Lie group of dimension three. To accomplish this, we provide a detailed description of the linear and invariant vector fields on \(\mathcal{H}\) in order to construct potential linear control systems. Additionally, we identify the closed subgroups and their algebras, along with the projection map from the group to the corresponding homogeneous space, which play the role as the output map. With this information, we characterize observability in all these cases based on the results mentioned.

 The concept of observability is not new and there are many papers studying this property to different class of systems, (see \cite{Kalman1}, \cite{Kalman2} and \cite{Sontag}, for instance). Nowadays, its very used in dynamical and algebraic systems (see \cite{Berger}, \cite{Etlili} and \cite{Sivalingam} for instance). 
 
 Our most important tools will be the ideas and results introduced in \cite{AyalaHaci} and \cite{Ayalaetal}. 
	
	This article is divided as follows: the Section 2 is dedicated to introduce the basic properties and definitions and Section 3 contains the main results, including the conditions the system has to satisfy to ensure global observability. Also in section 3, we briefly discussed the conditions when the system is not observable. 
	
	\section{Preliminaries and Notations}
	
	As we saw in the introduction, the observability property of a linear control system $\Sigma$ on the Euclidean space  its determined by the pair 
$(A,y)$, i.e., the drift and the output map.
The flow of $A$ in $\Sigma$ reads as $e^{tA}$, which is an automorphism. In the Lie group context this matrix is replaced by a linear vector fields, which means by definition that its flows is a 1-parameter groups of Aut(G), the Lie group of G-automorphisms, exactly as in the Euclidean case. In a more algebraic way, it is worth to mention that the Lie bracket between the drift and any left invariant vector field gives a new element of the Lie algebra, in both cases.
On the other hand, the linear map  $C$ is changed by a projection from the group to a homogeneous space of $G$. 
	
 Denotes by $\mathfrak {X}(G)$ the Lie algebra of all smooth vector fields on \( G \), then   the normalizer of \( \mathfrak{g} \) related to $\mathfrak{X}(G)$ appears in \cite{AyTi}, and is defined by 
	
	\[
	\text{norm}_{\mathfrak{X}(G)}(\mathfrak{g}) = \{ \mathfrak {X}{} \in \mathfrak{X}(G) \mid \text{ad}(\mathcal{X})(Y) = [\mathcal{X},Y] \in \mathfrak{g}, \ \forall Y \in \mathfrak{g} \}.
	\]
	
	A vector field $\mathcal{X} \in \text{norm}_{\mathfrak{X}(G)}(\mathfrak{g})$ is called affine. And, if $\mathcal{X}(e)=0$ then it is linear, where $e$ stands for the identity element of $G$. It is important to note that for any constant admissible control in either the Euclidean or Lie group context, all vector fields in these systems belong to the normalizer.
	
	Consider $K$ a subgroup of $G$ and the canonical projection $\pi_K: G \longrightarrow G/K$ as output map. Hence, we are willing to introduce the following notion.
		
	\begin{definition} The pair $(\mathcal{X}, \pi_K)$ is called a general pair on $G$.
	\end{definition}
	
	Now, denote by $\varphi_t$ the flow of the vector field $\mathcal{X}$. Hence the main concepts of this work are presented.

	\begin{definition}A general pair $(\mathcal{X},\pi_K)$ is said to be:
		\begin{itemize}
			\item[1-] observable at $x_1$ if for all $x_2 \in G\setminus\{x_1\}$, there exists a $t \geq 0$ such that $\pi_K(\varphi_t(x_1)) \neq \pi_K(\varphi_t(x_2))$. 
			\item[2-] locally observable at $x_1$ if there exists a neighborhood of $x_1$ such that the condition $1$ is satisfied for each $x$ in the neighborhood.
			\item[3-] observable (locally observable) if it is observable (locally observable) for every $x \in G$. 
		\end{itemize}
	\end{definition}
	
	According to \cite[Corollary 2.6]{Ayalaetal}, the linear pair $(\mathcal{X},\pi_K)$ is \textbf{locally observable} if, and only if, the set 
	\begin{equation*}
		I = \{(x,y,z) \in H: \varphi_t(x,y,z) \in K, \forall t \in \R\},
	\end{equation*}
	is discrete. Moreover, $(\mathcal{X},\pi_K)$ is \textbf{observable} if, and only if, the pair is locally observable and 
 $$Fix(\varphi) \cap K = \{e\},$$ where $Fix(\varphi)$ is the set of fixed points of the flow of $\varphi$, (see \cite[Theorem 2.5]{AyalaHaci}).
	
\begin{remark}\label{remarkimp} Consider $G$ a connected Lie group and $H_1,H_2 \subset G$ subgroups of $G$. For $i = 1,2$, assume $h_i: G \longrightarrow H_i$, are homomorphisms whose corresponding kernels are $K_1$ and $K_2$. If $K_1 = K_2$, then $(\mathcal{X}, \pi_{K_1})$ is observable if, and only if, $(\mathcal{X}, \pi_{K_2})$ is observable. 
 
To prove this statement considering $I_i = \{p \in G: \varphi_t(p) \in K_i, \forall t \in \R\}$ for $i=1,2$. It turns out that $I_1 = I_2$, and also   $$Fix(\varphi) \cap K_1 = Fix(\varphi) \cap K_2.$$
  
Finally, we note that the definition of an observable linear pair can be viewed as observable in $K$. Furthermore, most of the subgroups of $\mathcal{H}$ are represented by the kernels of homomorphisms.
	
	\section{The 3-dimensional Heisenberg Lie group} 

In this main section, we begin by reviewing key facts about the Heisenberg Lie group $\mathcal{H}$. Then, after studying the fixed points of the linear vector field, describing the closed and simply connected subgroups $H$, finding the homomorphisms $h: \mathcal{H} \rightarrow H$ and their kernels, we study the observability. 
 	
	\subsection{Linear systems on $\mathcal{H}$}
	
	Consider the Heisenberg group 
	\begin{equation*}
		\mathcal{H} = \left\{
		\begin{bmatrix}
			1 & y & z \\
			0 & 1 & x\\
			0 & 0 & 1
		\end{bmatrix} \in GL_3(\R): x,y,z \in \R\right\}
	\end{equation*}
	endowed with the matrix product. Then $\mathcal{H}$ is a $3-$dimensional Lie group, whose Lie algebra is given by 
	\begin{equation*}
		\mathfrak{h} = \left\{
		\begin{bmatrix}
			0 & y & z \\
			0 & 0 & x\\
			0 & 0 & 0
		\end{bmatrix} \in \mathfrak{gl}_3(\R): x,y,z \in \R\right\},
	\end{equation*}
	with the usual matrix Lie bracket. In particular, $\mathcal{H}$ is diffeomorphic to $\R^3$ with  the product 
	\begin{equation}\label{productH}
		(x,y,z) \cdot (w,s,t) = (x + w + yt, y + s,z + t)
	\end{equation}
	and the Lie algebra being $\R^3$ itself,  with the Lie bracket 
	\begin{equation*}
		[(x,y,z), (a,b,c)] = (0,0,ya-bx). 
	\end{equation*}
	
	According to Jouan \cite{DathAndJouan}, the derivations of $\mathfrak{h}$, in the basis $(X,Y,Z)$ are given by 
	\begin{equation*}
		D = 
		\begin{bmatrix}
			a & b & 0 \\
			c & d & 0 \\
			e & f & a+d
		\end{bmatrix}.
	\end{equation*}
	
	Also, the linear vector fields are expressed in the form 
	\begin{equation*}
		\mathcal{X}(x,y,z) = \left(ax + by, cx + dy, ex + fy + (a+d)z\right)
	\end{equation*}
	which generates the following system
	\begin{equation}\label{odeX}
		\left\{
		\begin{array}{ccl}
			\dot{x} &=& ax + by\\
			\dot{y} &=&  cx + dy\\
			\dot{z} &=& ex + fy + (a+d)z
		\end{array}\right.
	\end{equation}
	whose solution with initial condition $(x,y,z)$ is given by 
	\begin{equation}\label{generalsolutionheis}
		\varphi(t,(x,y,z)) = \left(e^{tA}(x,y), \left\langle e^{t (a+d)}\int_0^t e^{s(A-(a+d)I_2)^T}(e,f)ds,(x,y)\right\rangle + e^{t(a+d)}z\right) 
	\end{equation}
	where $A =\begin{bmatrix}
		a & b \\
		c & d
	\end{bmatrix}$. 
	
	\begin{remark}It is clear that $e^{tA}(x,y)$ is the solution of the first and the second lines in the linear system (\ref{odeX}). Now, for the third line, derivating the map
		\begin{equation*}
			f(t) = \left\langle e^{t (a+d)}\int_0^t e^{s(A-(a+d)I_2)^T}(e,f)ds,(x,y)\right\rangle + e^{t(a+d)}z,
		\end{equation*}
		we get 
		\[
			f'(t) =
			\langle e^{t(a+d)}e^{t(A-(a+d)I_2)^T}(e,f),(x,y) \rangle + (a+d) f(t).
		\]
		
		If we suppose that 
		\begin{equation*}
			e^{tA} = \begin{bmatrix}
				g_{11}(t) & g_{12}(t)\\
				g_{21}(t) & g_{22}(t)
			\end{bmatrix}, 
		\end{equation*}
		we get $e^{t(a+d)}e^{t(A-(a+d)I_2)^T} = (e^{tA})^T$ and 
		\begin{equation*}
			(e^{tA})^T(e,f) = (g_{11}(t)e + g_{21}(t) f,g_{12}(t)e + g_{22}(t) f).  
		\end{equation*}
		
		Therefore 
		\begin{equation*}
			\langle (e^{tA})^T(e,f),(x,y)\rangle  = e (g_{11}(t)x + g_{12}(t)y) + f(g_{21}(t)x + g_{22}(t)y) = e x(t) + fy(t), 
		\end{equation*}
		as claimed. 
	\end{remark}
	
	In particular, considering $\alpha$ and $\beta$ the roots of the characteristic polynomial of $A$, given the matrix $A$, it follows by the Sylvester's formula that 
	\begin{equation}\label{Sylvexp}
		e^{tA} = s_0(t)I + s_1(t)A, 
	\end{equation}
	where
	\begin{equation}\label{s_0}
		s_0(t) = \left\{
		\begin{array}{cc}
			\dfrac{\beta e^{t\alpha} - \alpha e^{t\beta}}{\beta-\alpha},& \alpha \neq \beta  \\
			(1- \alpha t)e^{t\alpha},& \alpha = \beta 
		\end{array}\right.
	\end{equation}
	and 
	\begin{equation}\label{s_1}
		s_1(t) = \left\{
		\begin{array}{cc}
			\dfrac{e^{t\alpha} - e^{t\beta}}{\alpha - \beta},& \alpha \neq \beta  \\
			te^{t\alpha},& \alpha = \beta 
		\end{array}\right.
	\end{equation}
	
	\subsection{Fixed points of $\mathcal{X}$}\label{fixedpoints}
	
As we saw in the previous section, to study global observability, we need to know the fixed points of $\mathcal{X}$, that is, the points  $(x,y,z) \in \mathcal{H}$ such that $(\dot{x}, \dot{y}, \dot{z}) = (0,0,0)$. In the system (\ref{odeX}), we get 
	\begin{equation}\label{linearsystem0}
		\left\{\begin{array}{ccl}
			ax + by &=& 0 \\
			cx + dy &=& 0 \\
			ex + fy + (a+d)z &=& 0 
		\end{array}\right.
	\end{equation}
	
	Considering the matrix $A = \begin{bmatrix}
		a & b \\
		c & d
	\end{bmatrix}$, let us split in cases. 
	
	\noindent\textbf{Case 1: $A$ is invertible:} From the system (\ref{linearsystem0}), we get $x = y = 0$. Then $(a + d)z = 0$, which implies $a = -d$ or $z = 0$. If $a = -d$, then $z$ is an arbitrary element of $\R$. Therefore, we get  
	\begin{equation*}
		\varphi_t(x,y,z) = (0,0,z), \forall t \in \R,
	\end{equation*}
	
	Now, if $a\neq -d$, then $z(t) = e^{(a+d)t}z$. Consequently 
	\begin{equation*}
		\varphi_t(x,y,z) = (0,0,z), \forall t \in \R \iff z = 0. 
	\end{equation*}
	
	Summing up, we get 
	\begin{equation*}
		Fix(\varphi)= \{(x,y,z) \in \R^3: x=y=0\},
	\end{equation*}
	if $a = -d$ and 
	\begin{equation*}
		Fix(\varphi)= \{(0,0,0)\}, 
	\end{equation*}
	if $a \neq -d.$ 
	
	\noindent\textbf{Case 1: $A$ is not invertible:} The determinant of $A$ give us the expression 
	\begin{equation*}
		ad = cb. 
	\end{equation*}
	
	WLOG, let us suppose that $a \neq 0$. Then $d = \frac{cb}{a}$. From system (\ref{linearsystem0}), we get $x = \frac{-by}{a}$. In the equation of $\dot{z}$, we obtain 
	\begin{equation}\label{eqyzz}
		\left(f - \frac{be}{a}\right)y + (a+d)z = 0. 
	\end{equation}
	
	Spliting into cases, if $a\neq -d$ and $f \neq \frac{be}{a}$, we get 
	\begin{equation*}
		Fix(\varphi) = \left\{(x,y,z) \in \mathcal{H}: x = \frac{-by}{a}, \left(\frac{be}{a}- f\right)y = (a+d)z\right\}. 
	\end{equation*}
	
	If $a  = -d$ and $f \neq \frac{be}{a}$ then 
	\begin{equation*}
		Fix(\varphi) = \left\{(x,y,z) \in \mathcal{H}: x = y = 0\right\}. 
	\end{equation*}
	
	If $a \neq -d$ and $f = \frac{be}{a}$, then 
	\begin{equation*}
		Fix(\varphi) =\left\{(x,y,z) \in \mathcal{H}: z = 0, x = \frac{-by}{a}\right\}. 
	\end{equation*}
	
	If $a = -d$ and $f = \frac{be}{a}$, then 
	\begin{equation*}
		Fix(\varphi) =\left\{(x,y,z) \in \mathcal{H}: x = \frac{-by}{a}\right\}. 
	\end{equation*}
	
	\subsection{Subgroups of $\mathcal{H}$ and observability}
	
A key challenge is to classify the subgroups of $\mathcal{H}$ to examine all homomorphisms between the Heisenberg group and its subgroups. Based on the classification provided in [5, Proposition 2], and focusing exclusively on closed and simply connected subgroups, up to isomorphisms we have only the following sets:	
 
	\begin{equation*}
		\begin{array}{cl}
			H_1 = &\{(x,y,z) \in \mathcal{H}: z = 0\}  \\
			H_2 = &\{(x,y,z) \in \mathcal{H}:  y = 0\}  \\
			H_3 = &\{(x,y,z) \in \mathcal{H}: x = z = 0\}  \\
			H_4 = &\{(x,y,z) \in \mathcal{H}: x = y = 0\}  \\
			H_5 = &\{(x,y,z) \in \mathcal{H}: y = z = 0\}  \\
			H_6 = &\{(x,y,z) \in \mathcal{H}: x = y, z = 0\}  \\
			H_7 = &\{(x,y,z) \in \mathcal{H}: x = z, y = 0\}. \\
		\end{array}
	\end{equation*}
	Furthermore, given $\hat{a},\hat{b} \in \R$ with $\hat{a}\hat{b} \neq 0$, we obtain
	\begin{eqnarray*}
		H_8 = &\{(\hat{a}t,\hat{b}t,0) \in \mathcal{H}: t \in \R\} \\
		H_9 = &\{(\hat{a}s,0,\hat{b}s) \in \mathcal{H}: s \in \R\}.
	\end{eqnarray*}
	
	To illustrate the technique we will use in this section, let us work out an example. 
 
Consider the Lie subgroup $H_1$ above  and the map $h: \mathcal{H} \longrightarrow H_1$ given by \begin{equation}\label{h1}
			h(x,y,z) = (y,z,0). 
		\end{equation}
		
		We claim  that $h$ is a Lie homomorphism. In fact, 
		\begin{eqnarray*}
			h((x_1,x_2,x_3) \cdot (y_1,y_2,y_3)) &=& h(x_1 + y_1 + x_2y_3,x_2 + y_2, x_3 + y_3)\\
			&=&(x_2 + y_2, x_3 + y_3,0).
		\end{eqnarray*}
		And, on the other hand 
		\begin{eqnarray*}
			h(x_1,x_2,x_3) \cdot h(y_1,y_2,y_3) &=& (x_2,x_3,0)\cdot(y_2,y_3,0)\\
			&=&(x_2 + y_2, x_3 + y_3,0).
		\end{eqnarray*}
		
	Therefore, we obtain $$h((x_1,x_2,x_3) \cdot (y_1,y_2,y_3)) = h(x_1,x_2,x_3) \cdot h(y_1,y_2,y_3).$$ It is clear that $h$ is differentiable and thus a Lie homomorphism. 
  
  Let us denote by $K = \ker(h)$, the kernel of the out map. It turns out that
		\begin{equation}\label{K_1}
			K = \{(x,y,z) \in \mathcal{H}: y = z = 0\}. 
		\end{equation}
		
		Next, we consider the general pair $(\mathcal{X},\pi_K)$, where $\pi_K: \mathcal{H} \longrightarrow \mathcal{H}/K$ is the canonical projection. 
  
  Regarding to the homomorphism $h$ in (\ref{h1}), we get the following proposition.
		
\begin{proposition}Consider the linear pair $(\mathcal{X}, \pi_K)$, with $K$ given by (\ref{K_1}). Then, $(\mathcal{X},\pi_K)$ is observable if $a+d\neq 0$ and $e \neq 0$.  \end{proposition}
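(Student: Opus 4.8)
The plan is to apply the observability criterion recalled immediately above the statement: $(\mathcal{X},\pi_K)$ is observable precisely when the set $I=\{p\in\mathcal{H}:\varphi_t(p)\in K\ \forall t\in\R\}$ is discrete and $Fix(\varphi)\cap K=\{e\}$. Since $h(x,y,z)=(y,z,0)$, the kernel is the line $K=\{(x,0,0):x\in\R\}$, so belonging to $K$ is exactly the vanishing of the second and third coordinates. I would therefore split the work into establishing local observability (discreteness of $I$) and then the fixed-point condition.

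First I would compute $I$. If $p=(x,y,z)\in I$, then already $\varphi_0(p)=p\in K$ forces $y=z=0$, so $p=(x,0,0)$. The requirement $\varphi_t(p)\in K$ for all $t$ means the third coordinate $z(t)$ of the trajectory vanishes identically; rather than solving for $z(t)$ via (\ref{generalsolutionheis}), I would simply differentiate this identity once and evaluate at $t=0$. By the third line of (\ref{odeX}), $\dot z(0)=ex+fy+(a+d)z=ex$ because $y=z=0$, and $z(t)\equiv 0$ forces $\dot z(0)=0$; hence $ex=0$, and the hypothesis $e\neq 0$ gives $x=0$. Thus $I\subseteq\{e\}$, and since $e$ is an equilibrium we have $\varphi_t(e)=e\in K$, so $I=\{e\}$ is discrete and the pair is locally observable.

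It then remains to check $Fix(\varphi)\cap K=\{e\}$. The cleanest route is to notice that any $p\in Fix(\varphi)\cap K$ satisfies $\varphi_t(p)=p\in K$ for all $t$, so $Fix(\varphi)\cap K\subseteq I=\{e\}$, which already closes the argument. Alternatively, and this is where the hypothesis $a+d\neq 0$ enters, one may read the intersection off the explicit classification of $Fix(\varphi)$ from Subsection \ref{fixedpoints}: imposing $y=z=0$ in each case compatible with $a+d\neq 0$ (either $A$ invertible, giving $Fix(\varphi)=\{e\}$ directly, or $A$ singular with $x=-by/a$ together with the remaining relation) collapses everything to $x=0$. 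Either way $Fix(\varphi)\cap K=\{e\}$, and combined with local observability the pair is observable.

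The step I expect to be the crux is the analysis of $I$, because its definition quantifies over all $t\in\R$ and so nominally requires understanding the full flow, including the matrix exponential $e^{tA}$ and the integral term in (\ref{generalsolutionheis}). The simplification I am counting on is that one does not need the whole trajectory: differentiating the constraint $z(t)\equiv 0$ a single time at the origin converts an infinite family of conditions into the one linear equation $ex=0$, after which $e\neq 0$ finishes it. A minor point to handle with care is the division by $a$ in the singular-$A$ branch of the fixed-point classification, which is why I would prefer the inclusion $Fix(\varphi)\cap K\subseteq I$ as the primary argument.
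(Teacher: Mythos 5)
Your proof is correct, and while it follows the same overall framework as the paper (necessarily so, since both reduce observability to the two conditions ``$I$ discrete'' plus ``$Fix(\varphi)\cap K=\{e\}$'' from \cite[Corollary 2.6]{Ayalaetal} and \cite[Theorem 2.5]{AyalaHaci}), your execution of the key step is genuinely different and cleaner. The paper computes $I$ by unwinding the explicit flow: it uses Sylvester's formula to extract $y=0$ from the second coordinate at $t=0$, then writes down the closed-form expression (\ref{systemy=0}) for $z(t)$ (which involves denominators $a+d$) and evaluates it to force $z=0$ and then $x=0$. You instead observe that membership in $K$ at $t=0$ already gives $y=z=0$, and then a single differentiation of the identity $z(t)\equiv 0$ at $t=0$ yields $ex=0$, so $e\neq 0$ finishes the computation of $I$ without ever integrating the third equation. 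Your treatment of the fixed-point condition via the inclusion $Fix(\varphi)\cap K\subseteq I$ likewise bypasses the case analysis of Subsection \ref{fixedpoints} that the paper invokes. Two consequences worth noting: your argument shows the hypothesis $a+d\neq 0$ is actually not needed for this proposition (the conclusion follows from $e\neq 0$ alone), whereas the paper's route uses $a+d\neq 0$ to make its explicit solution formula valid; and your infinitesimal argument sidesteps the formula (\ref{systemy=0}) entirely, which is an advantage since that formula is only the generic form of the solution. The trade-off is that the paper's explicit computation produces the trajectory formula that is reused in later sections, while yours is purely local at $t=0$.
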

		
\begin{proof}Denotes by $\varphi_t(x,y,z) = (x(t),y(t),z(t))$ the solution of $\mathcal{X}$. Since $\varphi_t(x,y,z) \in K$, the formula in (\ref{Sylvexp}) implies that $cs_1(t)x + (ds_1(t) + s_0(t))y = 0$, for all $t \in \R$. Considering $t=0$, we get $y = 0$. 

Through the system (\ref{odeX}) we get $cx = 0, $ and the solution is in the form
			\begin{equation}\label{systemy=0}
				\varphi_t(x,0,z) = \left(e^{ta}x, 0, e^{(a+d)t}\left(z + \frac{ex}{a+d}\right) - \frac{e e^{ta}x}{a+d}\right). 
			\end{equation}
			
			Since $e^{(a+d)t}\left(z + \frac{ex}{a+d}\right) - \frac{e e^{ta}x}{a+d} = 0$, with $t=0$ we get $z = 0$. Therefore, 
			\begin{equation*}
				e^{(a+d)t}\left(\frac{ex}{a+d}\right) - \frac{e e^{ta}x}{a+d} = 0,
			\end{equation*}
			and we get $x = y = z = 0$. Thus, $I = \{(0,0,0)\}$.
   
   Consequently, by \cite[Corollary 2.6]{Ayalaetal}, the pair $(\mathcal{X},\pi_K)$ is locally observable. 
			
			Now, regarding to the set $Fix(\varphi)$, if $y = z = 0$, we get for any of the cases in Section (\ref{fixedpoints}) that $$Fix(\varphi) \cap K = \{(0,0,0)\}.$$ Therefore, by \cite[Theorem 2.5]{AyalaHaci}, the system is observable. 
		\end{proof}
	\end{remark}

In the following subsections, we will examine the general form of the homomorphism $h$ and identify the hypotheses required to ensure observability.
	
	\subsubsection{Subgroups $H_1$ and $H_2$}
	
	In order to consider a general case, let us suppose that $h: \mathcal{H} \longrightarrow H_1$ is given by 
	\begin{equation}\label{h2}
		h(x,y,z) = (h_1(x,y,z), h_2(x,y,z),0). 
	\end{equation}
	where $h_i: \mathcal{H}  \longrightarrow \R$, according to the product (\ref{productH}), must satisfy 
	\begin{equation}\label{hgeral}
		h_i (x_1 + y_1 + x_2y_3,x_2 + y_2,x_3 + y_3) = h_i(x_1,x_2,x_3) + h_i(y_1,y_2,y_3), i = 1,2.  
	\end{equation}
	
	If $h_i(x,y,z) = \alpha_i x + \beta_i y + \gamma_i z$, then 
	\begin{equation*}
		\alpha_i(x_1 + y_1 + x_2y_3) + \beta_i(x_2 + y_2) + \gamma_i(x_3 + y_3) = \alpha_i x_1 + \gamma_i x_2 + \gamma_i x_3 + \alpha_i y_1 + \beta_i y_2 + \gamma_i y_3.  
	\end{equation*}
	
	It is easy to check, by the equality above that $\alpha_i = 0$, for $i=1,2.$ Therefore 
	\begin{equation*}
		h(x,y,z) = (\beta_1 y + \gamma_1 z, \beta_2 y + \gamma_2 z,0). 
	\end{equation*}
	
	Consequently, the kernel of $h$ is the solution of the linear system 
	\begin{equation}\label{kernellinearsystem}
		\left\{\begin{array}{cc}
			\beta_1 y + \gamma_1 z &= 0  \\
			\beta_2 y + \gamma_2 z &= 0 
		\end{array}\right.
	\end{equation}
	
	Let us analyze the conditions for the observability of the linear pair $(\mathcal{X}, \pi_K)$. Considering the expression for $h$, given in (\ref{h1}), for the general case where $h$ is defined in (\ref{h2}), we face the linear system in (\ref{kernellinearsystem}). 
 
 By introducing the matrix $B = \begin{bmatrix}
		\beta_1 & \gamma_1\\
		\beta_2 & \gamma_2
	\end{bmatrix}$, we get the following result. 
	
	\begin{proposition}If $B$ is invertible, $a+d \neq 0$ and $e \neq 0$, the linear pair $(\mathcal{X}, \pi_K)$ is observable. 
	\end{proposition}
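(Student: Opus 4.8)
The plan is to reduce the statement to the case already settled in the preceding Proposition, exploiting the principle recorded in Remark \ref{remarkimp} that observability of a linear pair depends only on the kernel $K$, and not on the particular homomorphism producing it.

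First I would pin down $K$ explicitly. The hypothesis that $B$ is invertible means $\det B = \beta_1\gamma_2 - \beta_2\gamma_1 \neq 0$, so the homogeneous system (\ref{kernellinearsystem}) admits only the trivial solution $(y,z) = (0,0)$, while $x$ remains free. Hence
$$K = \{(x,y,z) \in \mathcal{H} : y = z = 0\},$$
which is precisely the kernel (\ref{K_1}) attached to the homomorphism (\ref{h1}).

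With $K$ identified, I would invoke Remark \ref{remarkimp}: since this $K$ coincides with the kernel of (\ref{h1}), the pair $(\mathcal{X},\pi_K)$ is observable exactly when the pair of the preceding Proposition is. Because the present hypotheses $a+d \neq 0$ and $e \neq 0$ are identical to those of that Proposition, observability follows at once.

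Should a self-contained argument be preferred, I would replay the earlier computation for this $K$: the condition $\varphi_t(x,y,z) \in K$ for all $t$ forces $y(t) = z(t) = 0$; evaluating the $y$-coordinate via Sylvester's formula (\ref{Sylvexp}) at $t=0$ yields $y = 0$; the reduced solution (\ref{systemy=0}), together with $a+d \neq 0$ and $e \neq 0$, then forces $z = 0$ and $x = 0$, so that $I = \{(0,0,0)\}$ is discrete and the pair is locally observable by \cite[Corollary 2.6]{Ayalaetal}; finally one checks against the cases of Section \ref{fixedpoints} that $Fix(\varphi) \cap K = \{(0,0,0)\}$ and concludes observability by \cite[Theorem 2.5]{AyalaHaci}. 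I do not anticipate a genuine obstacle here: the entire content of the Proposition is the observation that invertibility of $B$ collapses the general kernel (\ref{kernellinearsystem}) onto the one already handled, so the single point deserving care is the determinant computation that identifies $K$.
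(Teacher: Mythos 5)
Your proposal is correct and follows essentially the same route as the paper: both identify that invertibility of $B$ collapses the system (\ref{kernellinearsystem}) to $K=\{(x,y,z): y=z=0\}$ as in (\ref{K_1}), then reuse the computation leading to (\ref{systemy=0}) to conclude $I=\{(0,0,0)\}$ and check $Fix(\varphi)\cap K=\{(0,0,0)\}$ against the cases of Section \ref{fixedpoints}. Your additional framing via Remark \ref{remarkimp} (observability depends only on the kernel) is a slightly cleaner way to package the reduction to the preceding proposition, but it is not a substantively different argument.
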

	
	\begin{proof}The hypothesis above guarantees that the set $K$ is given by 
		\begin{equation*}
			K = \{(x,y,z) \in \mathcal{H}: y = z = 0\}. 
		\end{equation*}
		
		Using again the expression (\ref{odeX}), we get $\varphi_t(x,y,z) \in K$ if $y = z = 0$. Consequently, $cx = ex = 0$. Through the ordinary differential equation 
		\begin{equation*}\left\{
			\begin{array}{ccc}
				\dot{X} &=& \mathcal{X}(x,y,z),\\
				X(0)&=& (x_0,y_0,z_0), 
			\end{array}\right.
		\end{equation*}
		we get the same solution as in (\ref{systemy=0}), which implies local observability by \cite[Corollary 2.6]{Ayalaetal}.
		
		The set $Fix(\varphi)$ was given in the previous section. In any case, $A$ is invertible or not, we get $$Fix(\varphi) \cap K = \{(0,0,0)\},$$ which implies observability.   
	\end{proof}
	
	The case when $\det B = 0$ is more unstable. If $B$ is not invertible, we define the homomorphism
	\begin{equation*}
		h(x,y,z) = \left(\beta_1 y + \gamma_1 z, \beta_2 y + \frac{\beta_2 \gamma_1}{\beta_1} z,0\right).
	\end{equation*}
	A straightforward computation shows that
	\begin{equation*}
		\ker h = \{(x,y,z) \in \mathcal{H}: \beta_1 y = -\gamma_1 z\}
	\end{equation*}
	
	Considering any possible case, we get 
	\begin{equation}\label{kernvariouscases}
		\ker h =
		\left
		\{\begin{array}{cl}
			\mathcal{H},& \beta_1 = \gamma_1 = 0,\\
			\{(x,y,z) \in \mathcal{H}: y = 0\},& \beta_1 \neq 0, \gamma_1 = 0,\\
			\{(x,y,z) \in \mathcal{H}: z = 0\},& \beta_1 = 0, \gamma_1 \neq 0,\\
			\{(x,y,z) \in \mathcal{H}: y = \frac{-\gamma_1}{\beta_1}z\},& \beta_1 \neq 0, \gamma_1 \neq 0. 
		\end{array}\right.
	\end{equation}
	the discussion about observability is in the next remark.  
	
	\begin{remark}\label{remarkesq}Let us discuss some specific conditions for the linear pair $(\mathcal{X}, \pi_K)$ when the kernel is given by the expression (\ref{kernvariouscases}). At first, the case $\gamma_1 = \beta_1 = 0$ implies obviously that $(\mathcal{X}, \pi_K)$ is not observable. The case $\beta_1 \neq 0$ and $\gamma_1 = 0$ generates the ODE system 
		\begin{equation}\label{odeXy=00}
			\left\{
			\begin{array}{ccl}
				\dot{x} &=& ax\\
				\dot{z} &=& ex + (a+d)z 
			\end{array}\right.
		\end{equation}
		which also get us to the solution in (\ref{systemy=0}) with $z(t)$ not necessarily zero, implying in $I = K$ and consequently, the pair $(\mathcal{X}, \pi_K)$ is not locally observable.  
		
		Now, considering the solution in (\ref{generalsolutionheis}), for the case when $\beta_1 = 0$ and $\gamma_1 \neq 0$, if $\varphi_t(x,y,z) \in K$ for all $t \in \R$, we get that $z = 0$ and
		\begin{equation*}
			\left\langle\int_0^te^{s(A-(a+d)I_2)^T}(e,f)ds, (x,y)\right\rangle = 0
		\end{equation*}
		
		Considering 
		\begin{equation*}
			\int_0^te^{s(A-(a+d)I_2)^T}(e,f)ds = (f_1(t),f_2(t)), 
		\end{equation*}
		we obtain 
		\begin{equation*}
			\left\langle\int_0^te^{s(A-(a+d)I_2)^T}(e,f)ds, (x,y)\right\rangle = f_1(t)x + f_2(t)y = 0,
		\end{equation*}
		which satisfies $f_1(0) = f_2(0) = 0$. 
  
  Consider $I_j = \{t \in \R: f_i(t) \neq 0\}$ for $j = 1,2$ and $J = I_1 \cap I_2$. For the case when $J = \emptyset,$ the equality $f_1(t)x + f_2(t)y=0$ implies that $f_1(t)x = 0$ with $f_1(t) \neq 0$ when $t\in I_1$ and $f_2(t)y=0$ with $f_2(t) \neq 0$ when $t \in I_2$. Therefore $x = y = 0$. We conclude that $I= \{(0,0,0)\}$ and the pair $(\mathcal{X}, \pi_K)$ is locally observable. 
  
  Now, if $J \neq \emptyset,$ the equality $f_1(t)x + f_2(t)y = 0$ generates a line in $\R^2$ for each $t \in \R$, which shows that $I$ is can not be discrete. Therefore, the linear pair $(\mathcal{X}, \pi_K)$ is not observable. 
	\end{remark}

	As a conclusion, the only general case for the pair $(\mathcal{X}, \pi_K)$ to be observable is when $c \neq 0$ and $B$ invertible.  
	
	\begin{remark}For the subgroup $H_2$, consider the case when $h: \mathcal{H} \longrightarrow H_2$ is given by 
		\begin{equation*}
			h(x,y,z) = (\alpha_1 x + \beta_1 y + \gamma_1 z, 0 , \alpha_2 x + \beta_2 y + \gamma_2 z). 
		\end{equation*}
		
		As the same reasoning in (\ref{hgeral}), we obtain $\alpha_1 = \alpha_2 = 0$. Therefore 
		\begin{equation*}
			h(x,y,z) = (\beta_1 y + \gamma_1 z, 0 ,\beta_2 y + \gamma_2 z)
		\end{equation*}
		
		One can notice that the kernel $K = \ker(h)$ can be also found using the system (\ref{kernellinearsystem}) and the same cases pre-estabilished. Therefore, the linear pair $(\mathcal{X}, \pi_K)$ is locally observable if $B = \begin{bmatrix}
			\beta_1 & \gamma_1 \\
			\beta_2 & \gamma_2
		\end{bmatrix}$ is invertible and $c \neq 0$. Also, under the same hypothesis we get $Fix(\varphi) \cap K = \{(0,0,0)\}$ and, consequently, the pair $(\mathcal{X}, \pi_K)$ is observable. 
	\end{remark}
	
	\subsubsection{Case $H_3$, $H_4$ and $H_5$}\label{H_3}
	
	The homomorphisms between $\mathcal{H}$ and $H_3$ are given by the maps $h: \mathcal{H} \longrightarrow H_3$ defined by 
	\begin{equation}\label{kerneldoisdim}
		h(x,y,z) = (0,\beta y + \gamma z, 0), 
	\end{equation}
	for some constants $\beta, \gamma \in \R$. For this case, we have 
	\begin{equation}\label{kernel1dim}
		\ker h = \{(x,y,z) \in \mathcal{H}: \beta y = -\gamma z\}. 
	\end{equation}
	which lead us to the same case in (\ref{kernvariouscases}), whose explanation is in the Remark (\ref{remarkesq}). 
	
	Now, for the subgroup $H_4$, considering $h: \mathcal{H} \longrightarrow H_4$ in the form $h(x,y,z) = (0,0,\beta y + \gamma z), $ we get that 
	\begin{equation*}
		K = \{(x,y,z) \in \mathcal{H}: \beta y = -\gamma z\}. 
	\end{equation*}
	which is precisely the previous case. The same reasoning can be made for the subgroup $H_5$. Now, for the subgroup $H_6$, considering the map $h: \mathcal{H} \longrightarrow H_6$ given by $h(x,y,z) = (\beta y + \gamma z, 0,\beta y + \gamma z)$, we get that 
	\begin{equation*}
		K = \{(x,y,z) \in \mathcal{H}: \beta y = -\gamma z\}. 
	\end{equation*}
	which is also the case for $H_3$. The case for $H_7$ is similar.  
	
	\subsubsection{Cases $H_8$ and $H_9$}
	
	Firstly, let us find the homomorphisms between $\mathcal{H}$ and $H_8$. By the same reasoning shown previously, we can take $h: \mathcal{H} \longrightarrow H_8$ by 
	\begin{equation*}
		h(x,y,z) = (\beta_1 y + \gamma_1 z , \beta_2 y + \gamma_2 z, 0). 
	\end{equation*}
	
	One can notice that the map above must satisfy 
	\begin{equation*}
		\left\{
		\begin{array}{cc}
			\beta_1 y + \gamma_1 z = t \hat{a}\\
			\beta_2 y + \gamma_2 z = t \hat{b}
		\end{array}\right.
	\end{equation*}
	for some $t \in \R$. This implies in 
	\begin{equation*}
		t = \frac{\beta_1 y + \gamma_1 z}{\hat{a}} = \frac{\beta_2 y + \gamma_2 z}{\hat{b}}. 
	\end{equation*}
	that is, 
	\begin{equation*}
		(\beta_1 y + \gamma_1 z)\hat{b} = (\beta_2 y + \gamma_2 z)\hat{a}, \forall y,z \in \R.  
	\end{equation*}
	
	Taking $y = 1$ and $z = 0$ we get $\beta_1 \hat{b} = \beta_2 \hat{a}$. Also, if $y = 0$ and $z = 1$, we get $\gamma_1 \hat{b} = \gamma_2 \hat{a}$. Therefore, considering $\alpha = \frac{\beta_1}{\hat{a}} = \frac{\beta_2}{\hat{b}}$ and $\beta = \frac{\gamma_1}{\hat{a}} = \frac{\gamma_2}{\hat{b}}$, we obtain 
	\begin{equation}\label{kernelline}
		h(x,y,z) = ((\alpha y + \beta z)\hat{a}, (\alpha y + \beta z)\hat{b}, 0). 
	\end{equation}
	the final form of the homomorphism $h$. Let us find the kernel of $K$. As a matter of fact, we need to study the equation 
	\begin{equation*}
		(\alpha y + \beta z) \hat{a} = 0. 
	\end{equation*}
	
	As we are considering $\hat{a} \neq 0$, we get $\alpha y + \beta z = 0$, which is the same equation in the expression for the kernel in (\ref{kernel1dim}), whose conditions for observability are discussed in the Remark (\ref{remarkesq}). For the subgroup $H_9$, considering the same reasoning above, we would get a homomorphism $h: H \longrightarrow H_9$ such that 
	\begin{equation*}
		h(x,y,z) = ((\alpha y + \beta z)\hat{a}, 0 , (\alpha y + \beta z) \hat{b}). 
	\end{equation*}
	which generates the same kernel as the previous case.
	
	\begin{remark}The case when $h \equiv 0$, that is, the homomorphism $h$ is trivial, we get that $K = \mathcal{H}$. Therefore, the system can not be observable, given that $\varphi_t(x,y,z) \in K,$ for every $(x,y,z) \in \mathcal{H}$, implying the set $I$ not discrete at all.
	\end{remark}
	
\section*{Conclusion and future research}
	
In this article, we introduced a precise method for determining the observability property of general pairs within the Heisenberg three-dimensional Lie group. This technique utilizes homomorphisms between the group and its simply connected subgroups, resulting in a quotient space as the output. Under certain assumptions about the system and the subgroup in question, we established several results that can be valuable for analyzing the system's dynamics from the perspective of the output space.

To avoid impractical scenarios, we identified subgroups in which the system cannot be observed, as well as some unusual cases where the system exhibits instabilities, as discussed in Remark (\ref{remarkesq}). This analysis uses results referenced in Section 2. 

For future research, we strongly believe that the techniques employed here could also be valuable for examining observability in non-nilpotent solvable affine three-dimensional Lie groups, which will be our next focus.
 	
\section{Acknowledgments:} The author A. J. Santana is partially supported by CNPq grant n. 309409/2023-3

\section{Conflict of interest:} The authors state no conflicts of interest.


\begin{thebibliography}{99}


		\bibitem{Ayalaetal} V. Ayala, A. Hacibekiroglu, E. Kizil, \textit{Observability of general linear pairs}, Computer $\&$ mathematics with applications. Elsevier, vol 39, (2000), 35-43.  
		
		\bibitem{AyalaandAdriano} V. Ayala, A. Da Silva,  \textit{On the characterization of the controllability property
			for linear control systems on nonnilpotent, solvable three-dimensional Lie groups}, Journal of Differential Equations, vol. 266, (2019),  8233-8257. 
		
		\bibitem{AyalaHaci}  V. Ayala, A. Hacibekiroglu, \textit{Observable linear pairs}, Computation and Applied Mathematics,  v. 13, (1997),  205-214. 

\bibitem{AyTi} V.~Ayala and J.~Tirao, \emph{Linear
control systems on Lie groups and Controllability,} Proceedings of Symposia in Pure Mathematics, Vol 64, AMS, 1999, 47-64. 
  
		
		\bibitem{Berger} T. Berger, T. Reis, S. Trenn,  \textit{Observability of linear differential-algebraic systems: A survey},  Surveys in Differential-Algebraic Equations IV, Differential-Algebraic Equations Forum, Springer, 2017.
		
		
		\bibitem{adrianoheis} A. Da Silva, E. Kizil,O. Duman, \textit{Linear control systems on homogeneous spaces of the Heisenberg group}, J Dyn Control Syst, (2024),  2065–2086.
		
		\bibitem{DathAndJouan} M. Dath, P. Jouan, \textit{Controllability of linear systems on low dimensional
			nilpotent and solvable Lie groups}, J Dyn Control Syst, 22, (2016), 207–225.
		
		\bibitem{Etlili}D. Etlili, O. Naifar, A. Errachdi,   \textit{Observers with Unknown Inputs of Linear Systems},  Advances in Observer Design and Observation for Nonlinear Systems. Studies in Systems, Decision and Control, vol 410, Springer, 2022.
		
		\bibitem{Kalman1} R.E. Kalman, \textit{On the general theory of control systems}, IFAC Proceedings, Vol. 1, (1960), 491–502. 
		
		\bibitem{Kalman2} R.E. Kalman,  \textit{Mathematical Description of Linear Dynamical Systems}, J.S.I.A.M. Control. vol 1, (2), (1963), 152–192. 
		
		\bibitem{perko} L. Perko,  \textit{Differential Equations and Dynamical Systems}, Springer. 2001.
		
		\bibitem{Sivalingam} S.M. Sivalingam, V. Govindaraj, \textit{Observability of Time-Varying Fractional Dynamical Systems with Caputo Fractional Derivative}, Mediterr. J. Math. 21, (2024),  75-96.

\bibitem{Pontryaguin}
L.S.~Pontryagin, V.G.~Boltyanskii, R.V.~Gamkrelidze, E.F.~Mishchenko, \emph{The
mathematical theory of optimal processes}, John Wiley and Sons, New York-London 1962. 
  
		\bibitem{Sontag} E. Sontag,  \textit{ Mathematical Control Theory}, Springer New York, NY. 1998. 
		
		\bibitem{onish} A.L. Onishchik, E.B. Vinberg, \textit{Lie groups and Lie algebras}, Springer-Verlag, 1993. 
		
		
		
	\end{thebibliography}
\end{document}